\documentclass[10pt]{article}

\usepackage{amsthm, amsmath, amsfonts, amssymb, latexsym}
\usepackage[utf8]{inputenc}
\usepackage[english]{babel}
\usepackage[hidelinks]{hyperref}
\usepackage{orcidlink}
\usepackage{graphicx}
\usepackage{pict2e}

\usepackage[backend=biber,style=trad-unsrt,doi=true,url=false,isbn=false]{biblatex}
\theoremstyle{plain}
\newtheorem{theorem}{Theorem}
\newtheorem{lemma}{Lemma}
\newtheorem{corollary}{Corollary}

\newtheorem{openproblem}{Open problem}
\newtheorem{conjecture}{Conjecture}

\theoremstyle{definition}

\theoremstyle{remark}

\begin{document}

    \title{A counterexample to the Albertson-Berman conjecture about induced forests in planar graphs}
    \author{Mikhail Makarov\orcidlink{0009-0004-3604-4547}\footnote{mikhail.makarov.math@gmail.com}}
    \date{}
    \maketitle

    \begin{abstract}
        For a graph $G$, denote by $a(G)$ the number of vertices in the largest induced forest in $G$. The Albertson-Berman conjecture, which had been open since 1979, states that $a(G) \geq \frac{n}{2}$ for every simple planar graph $G$ on $n$ vertices. Although the Albertson-Berman conjecture was recently resolved in the negative by constructing a counterexample with the help of AI, we independently found a counterexample to the Albertson-Berman conjecture without AI and present it in this article. Our counterexample is on $39$ vertices with $a(G)=19$. Finally, we indicate, without a full proof, a variant of this construction with a larger number of vertices, but with a slightly smaller ratio $\frac{a(G)}{n}=\frac{37}{76}$.
    \end{abstract}

    \section{Disclaimer}
        For the record, I found the counterexample to the AB conjecture described in this article on August 2, 2026 without AI. I was in the process of writing the draft of this article and of trying to further optimize the ratio $\frac{a(G)}{n}$ when I discovered that another preprint~\cite{jung_ab_conjecture_counterexample} was published on August 11, 2026 also claiming to have found a counterexample to the AB conjecture with a slightly smaller ratio $\frac{a(G)}{n}$ than mine, but with the help of AI. I decided to quickly finalize my draft and to publish it immediately as a preprint in its present somewhat unfinished form: with the full proof of the counterexample, but without finishing the research of further optimizing the ratio $\frac{a(G)}{n}$ that I planned, without optimizing several arguments (in particular, the cumbersome case analysis in several lemmas), with little editing, and without polishing the text. Even though it seems to be too late to claim priority for resolving the AB conjecture.

        As evidence that I had the counterexample before the publication of~\cite{jung_ab_conjecture_counterexample}, I include in this preprint a screenshot of the graph $G_{14}$ from my construction with a timestamp of August 3.

    \section{Introduction}
        For a graph $G$, denote by $a(G)$ the number of vertices in the largest induced forest in $G$.
        
        \begin{conjecture}[Albertson-Berman, AB, \cite{ab_conjecture}]\label{conjecture:ab}
            For every simple planar graph $G$ on $n$ vertices, we have $a(G) \geq \frac{n}{2}$.
        \end{conjecture}

        The AB conjecture had been open since 1979.

        Denote by $\alpha(G)$ the independence number of $G$, which is the number of vertices in the largest independent set in $G$. Also, denote by $\alpha_k(G)$ the largest possible combined number of vertices in $k$ disjoint independent sets in $G$, or, equivalently, the number of vertices in the largest $k$-colorable induced subgraph in $G$. In particular, we have $\alpha_1(G)=\alpha(G)$, and $\alpha_2(G)$ is the number of vertices in the largest bipartite induced subgraph in $G$.

        For $k \in \{1,2,3,4\}$, the Four Color Theorem~\cite{appel_haken_4_color_theorem_part1, appel_haken_4_color_theorem_part2} implies, by taking the $k$ largest color classes in a proper $4$-coloring of $G$, the lower bound $\alpha_k(G) \geq \frac{kn}{4}$ for every planar graph $G$ on $n$ vertices. In particular, we have the lower bounds $\alpha(G) \geq \frac{n}{4}$ and $\alpha_2(G) \geq \frac{n}{2}$.

        This is the only known proof of the lower bound $\alpha(G) \geq \frac{n}{4}$. It is an open problem to find a proof of the lower bound $\alpha(G) \geq \frac{n}{4}$ that does not use the Four Color Theorem. If the AB conjecture were true, it would have implied the lower bounds $\alpha(G) \geq \frac{n}{4}$ and $\alpha_2(G) \geq \frac{n}{2}$ as well because a forest is bipartite and hence an induced forest can be split into two independent sets, which immediately would have given $\alpha_2(G) \geq \frac{n}{2}$, and, taking the largest independent set of the two, would have given $\alpha(G) \geq \frac{n}{4}$. So, assuming that the proof of the AB conjecture itself had not used the Four Color Theorem, this would have given such an alternative proof of the lower bound $\alpha(G) \geq \frac{n}{4}$ without using the Four Color Theorem.

        The AB conjecture also can be viewed as a strengthening of the lower bound $\alpha_2(G) \geq \frac{n}{2}$ because an induced forest is effectively two independent sets with an additional condition that their union does not contain cycles.

        This additional condition was also previously considered for color classes (which are independent sets) in proper colorings of graphs, and a result on such colorings gives the best known lower bound on $a(G)$ for planar graphs. Specifically, a vertex coloring of a graph is called \emph{acyclic} if it is proper and the union of any two color classes induces a forest, or, equivalently, if it is proper and any cycle contains at least $3$ colors. The result that every planar graph has an acyclic $5$-coloring~\cite{borodin_acyclic_5-coloring} implies, by taking the two largest color classes, the lower bound $a(G) \geq \frac{2}{5}n$, which is currently the best known.

        The known tight example for the AB conjecture is a disjoint union of an arbitrary number of copies of $K_4$, which attains the exact equality $a(G)=\frac{n}{2}$.

        So, the best possible lower bound on $a(G)$ for planar graphs must be somewhere between $\frac{2}{5}n$ and $\frac{n}{2}$. The AB conjecture states that it is exactly $\frac{n}{2}$.

        Although the AB conjecture was recently resolved in the negative by constructing a counterexample with the help of AI~\cite{jung_ab_conjecture_counterexample}, we independently found a counterexample to the AB conjecture without AI and present it in this article.

    \section{The augmentation of a restricting graph}
        We call a graph \emph{$(1,2)$-edge-restricting} with a \emph{restricting edge} and two \emph{restricting vertices} incident to that edge if every largest induced forest in that graph contains either exactly $1$ or exactly $2$ restricting vertices (and does not contain the other restricting vertices). If, in addition, for each subset of $1$ or $2$ vertices of the restricting vertices, there exists a largest induced forest in that graph containing the vertices of that subset and not containing any other restricting vertices, then we call such a $(1,2)$-edge-restricting graph \emph{all-realizable}.

        For a graph $G$ on $n$ vertices, denote $r_a(G)=\frac{a(G)}{n}$.

        \begin{lemma}\label{lemma:augmenting_1-2-edge-restricting_to_counterexample}
            Let $H$ be a $(1,2)$-edge-restricting planar graph on $n \geq 2$ vertices. We construct a new graph $G$ by taking a triangle and attaching to each of its three edges a copy of $H$ by the restricting edge (identifying the restricting edge in each copy with an edge of the triangle). Then $G$ is a planar graph on $3n-3$ vertices with $a(G) \leq 3a(H)-2$ and $r_a(G) \leq r_a(H)-\frac{1}{n-1}\left(\frac{2}{3}-r_a(H)\right)$. If, in addition, $H$ is all-realizable, then we have the exact equalities $a(G) = 3a(H)-2$ and $r_a(G) = r_a(H)-\frac{1}{n-1}\left(\frac{2}{3}-r_a(H)\right)$.
        \end{lemma}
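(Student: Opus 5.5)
We set up notation first. Let the triangle be $xyz$, and let $H_1,H_2,H_3$ be the copies of $H$ attached along $xy$, $yz$, $zx$. Each copy shares exactly its two restricting vertices with the triangle. Planarity follows by embedding each copy of $H$ with its restricting edge on the outer face, which is always possible after re-embedding. We then glue each copy into the region outside the triangle along the corresponding edge. The vertex count is $3(n-2)+3=3n-3$.

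\textbf{Upper bound.} Let $F$ be any induced forest in $G$ and put $T=F\cap\{x,y,z\}$. Since $xyz$ is a triangle, $|T|\le 2$. For each $i$, $F\cap V(H_i)$ is an induced forest in $H_i$, so $|F\cap V(H_i)|\le a(H)$. We also know:
\begin{itemize}
\item if $F\cap V(H_i)$ contains $0$ restricting vertices, it is not a largest forest, so $|F\cap V(H_i)|\le a(H)-1$;
\item if it contains both restricting vertices, the bound $a(H)$ is allowed.
\end{itemize}
Counting by inclusion--exclusion over the shared triangle vertices,
\[
|F|=\sum_{i}|F\cap V(H_i)|-|T|.
\]
Now we split into cases by $|T|$.
\begin{itemize}
\item If $|T|=2$, say $T=\{x,y\}$, then $H_2$ and $H_3$ each contain exactly one restricting vertex, which is allowed, and $H_1$ contains two. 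This gives $|F|\le 3a(H)-2$.
\item If $|T|=1$, say $T=\{x\}$, then $H_2$ on edge $yz$ contains no restricting vertex. This gives $|F|\le 3a(H)-1-1=3a(H)-2$.
\item If $|T|=0$, all three copies lose at least one, so $|F|\le 3a(H)-3$.
\end{itemize}
In every case $a(G)\le 3a(H)-2$.

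\textbf{Ratio.} Write $a(H)=r n$. Then
\[
r_a(G)\le \frac{3rn-2}{3n-3}=\frac{rn-\tfrac23}{n-1}=r-\frac{\tfrac23-r}{n-1},
\]
which is a direct algebraic check.

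\textbf{Equality under all-realizability.} Take $T=\{x,y\}$, with $x,y$ adjacent. Choose largest forests as follows:
\begin{itemize}
\item $F_1$ in $H_1$ containing both $x$ and $y$;
\item $F_2$ in $H_2$ containing $y$ but not $z$;
\item $F_3$ in $H_3$ containing $x$ but not $z$.
\end{itemize}
Let $F$ be their union. It has $3a(H)-2$ vertices. Since the copies meet only in triangle vertices, and $z\notin F$, there are no edges of $G$ between different $F_i$ except through the shared vertices $x,y$, and the edge $xy$ lies inside $H_1$. So $F$ is induced, and it remains to show it is acyclic. A cycle in $F$ not contained in a single $F_i$ would have to pass between the copies through cut vertices. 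However, $\{x,y\}$ separates the copies, with $H_2$ attached only at $y$ and $H_3$ only at $x$ once $z$ is removed. So $x$ and $y$ are cut vertices of $F$, and every cycle lies within one block, that is, within one $F_i$. This is impossible since each $F_i$ is a forest.

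The main care point is this acyclicity check, in particular making sure that no cycle runs through $H_1$ and another copy simultaneously. This holds because each of $H_2$ and $H_3$ meets the rest of $F$ in only one vertex. The ratio equality then follows from the same algebra.
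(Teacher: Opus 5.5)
Your proof is correct and follows essentially the same route as the paper: the same case split on $|F\cap\{x,y,z\}|$ with the overlap correction $-|T|$, the same algebra for the ratio, and, in the all-realizable case, the same choice of forests (both restricting vertices in $H_1$, only $y$ in $H_2$, only $x$ in $H_3$). Your block/cut-vertex phrasing of acyclicity restates the paper's argument that a cycle entering $H_{vw}$ or $H_{wu}$ through its single available attachment vertex can never return; strictly, $x$ or $y$ need not be a cut vertex if the attached forest contains no neighbour of it, but in that case that copy contributes no cycle edges anyway.
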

        \begin{figure}[htb]
            \centering
            \setlength{\unitlength}{0.8mm}
            \begin{picture}(69.268145,64.016539)(-15.884072,-28.125)
                \Line(0,0)(18.75,32.47595)
                \Line(18.75,32.47595)(37.5,0)
                \Line(37.5,0)(0,0)

                \qbezier(0,0)(18.75,-56.25)(37.5,0)
                \qbezier(37.5,0)(76.838925,44.362975)(18.75,32.47595)
                \qbezier(18.75,32.47595)(-39.338925,44.362975)(0,0)

                \put(0,0){\circle*{1.5}}
                \put(18.75,32.47595){\circle*{1.5}}
                \put(37.5,0){\circle*{1.5}}

                \put(-4,-2){$u$}
                \put(17.5,34){$v$}
                \put(39,-2){$w$}

                \put(-0.3678,21.8630){\makebox(0,0){$H_{uv}$}}
                \put(37.8678,21.8630){\makebox(0,0){$H_{vw}$}}
                \put(18.75,-11.25){\makebox(0,0){$H_{wu}$}}
            \end{picture}
            \caption{The augmentation of a $(1,2)$-edge-restricting graph.}
            \label{figure:augmentation}
        \end{figure}
        \begin{proof}
            Denote the vertices of the triangle to which the copies of $H$ are attached by $u$, $v$, $w$. Denote by $H_{uv}$, $H_{vw}$, $H_{wu}$ the copies of $H$ attached to the edges $uv$, $vw$, $wu$, respectively.
            
            The planarity of $G$ follows from the fact that, for each of the three copies of $H$, we can take an embedding of $H$ into the plane with the restricting edge incident to the external face, and attach the copies of it to the triangle $uvw$ in three disjoint regions of the exterior of $uvw$.
            
            Consider an arbitrary induced forest $F$ in $G$. Each of the copies of $H$ has at most $a(H)$ vertices in $F$. We consider the cases of how many vertices among $u$, $v$, $w$ belong to $F$.

            Case 1. None of the vertices $u$, $v$, $w$ belongs to $F$. Then, for each of the three attached copies of $H$, $F$ does not contain any of the restricting vertices in that copy, which implies that the restriction of $F$ to that copy is not the largest induced forest in that copy. This means that the number of vertices of $F$ in each copy of $H$ is at most $a(H)-1$. Therefore, in total, $F$ has at most $3(a(H)-1) = 3a(H)-3 < 3a(H)-2$ vertices.

            Case 2. Exactly one of the vertices $u$, $v$, $w$ belongs to $F$. Without loss of generality, we can assume that $u$ belongs to $F$ and $v$ and $w$ do not belong to $F$. Then $F$ contains $1$ restricting vertex of $H_{uv}$, none of the restricting vertices of $H_{vw}$, and $1$ restricting vertex of $H_{wu}$. Therefore, $H_{uv}$ has at most $a(H)$ vertices of $F$, $H_{vw}$ has at most $a(H)-1$ vertices of $F$, and $H_{wu}$ has at most $a(H)$ vertices of $F$. Also, the vertex $u$ belonging to $F$ is common between $H_{uv}$ and $H_{wu}$, and none of the other vertices of $F$ is common between the copies of $H$. So, when summing the number of vertices of $F$ in the copies of $H$, we need to subtract $1$ for the common vertex. Therefore, in total, $F$ has at most $a(H)+(a(H)-1)+a(H)-1=3a(H)-2$ vertices.

            Case 3. Exactly two of the vertices $u$, $v$, $w$ belong to $F$. Without loss of generality, we can assume that $u$ and $v$ belong to $F$ and $w$ does not belong to $F$. Then each of the three copies of $H$ has at most $a(H)$ vertices of $F$, the vertex $u$ belonging to $F$ is common between $H_{uv}$ and $H_{wu}$, the vertex $v$ belonging to $F$ is common between $H_{uv}$ and $H_{vw}$, and none of the other vertices of $F$ is common between the copies of $H$. So, when summing the number of vertices of $F$ in the copies of $H$, we need to subtract $2$ for the two common vertices. Therefore, in total, $F$ has at most $3a(H)-2$ vertices.

            The case where all three vertices $u$, $v$, $w$ belong to $F$ is impossible because they form a cycle.

            So, in all possible cases, $F$ has at most $3a(H)-2$ vertices. Therefore, we have $a(G) \leq 3a(H)-2$, as claimed. Then we have $r_a(G) = \frac{a(G)}{3n-3} \leq \frac{3a(H)-2}{3n-3} = \frac{3r_a(H)n-2}{3n-3} = \frac{1}{n-1}\left(r_a(H)n-\frac{2}{3}\right) = \frac{1}{n-1}\left(r_a(H)(n-1)+r_a(H)-\frac{2}{3}\right) = r_a(H)-\frac{1}{n-1}\left(\frac{2}{3}-r_a(H)\right)$, as claimed.

            Now, assume that $H$ is all-realizable. Then we can choose largest induced forests $F_{uv}$, $F_{vw}$, $F_{wu}$ of size $a(H)$ in $H_{uv}$, $H_{vw}$, $H_{wu}$, respectively, such that $F_{uv}$ contains both $u$ and $v$, $F_{vw}$ contains $v$ and does not contain $w$, and $F_{wu}$ contains $u$ and does not contain $w$. Let us prove that we can merge $F_{uv}$, $F_{vw}$, $F_{wu}$ without creating a cycle, that is, their union, which we denote by $F$, is an induced forest in $G$. Suppose, to the contrary, that $F$ contains a cycle $C$. This cycle $C$ cannot be contained inside only one of the copies of $H$ and hence it must pass through at least two copies. If $C$ contains a vertex in $H_{wu}$, then the only way for it to get out of $H_{wu}$ is through the vertex $u$ because $w$ does not belong to $F$. But then $C$ can never return back to $H_{wu}$ because $w$ does not belong to $F$ and $u$ was already visited on the way out of $H_{wu}$. Therefore, $C$ cannot contain vertices in $H_{wu}$. Similarly, we can prove the symmetrical case that $C$ cannot contain vertices in $H_{vw}$. Therefore, $C$ can contain vertices from $H_{uv}$, which is a contradiction with the previously established fact that it must pass through at least two copies of $H$. This concludes the proof that $F$ is an induced forest.
            
            We count that $F$ contains exactly $3a(H)-2$ vertices (subtracting $2$ for the two common vertices). Therefore, we have $a(G) \geq |V(F)|=3a(H)-2$. Combining this with the opposite inequality $a(G) \leq 3a(H)-2$ obtained above, we get the exact equality $a(G)=3a(H)-2$, as claimed. The inequality for $r_a(G)$ also becomes the exact equality because the only inequality used in its derivation was $a(G) \leq 3a(H)-2$, which, as we have just proved, became the exact equality. So, we have $r_a(G) = r_a(H)-\frac{1}{n-1}\left(\frac{2}{3}-r_a(H)\right)$, as claimed.
        \end{proof}

        Lemma~\ref{lemma:augmenting_1-2-edge-restricting_to_counterexample} means that, by augmenting a $(1,2)$-edge-restricting planar graph $H$ into $G$, we strictly decrease the ratio $r_a$ if $r_a(H) < \frac{2}{3}$. In particular, if we find a $(1,2)$-edge-restricting planar graph $H$ with $r_a(H)=\frac{1}{2}$, then, augmenting it, we would obtain a counterexample to the AB conjecture.

    \section{The restricting graph}
        We consider the planar graph on $8$ vertices shown in Figure~\ref{figure:G_8}, denote it by $G_8$, and denote its vertices as shown in that figure. We call the vertices $c_1$ and $c_2$ \emph{central} and the remaining vertices $u_1$, $u_2$, $u_3$, $u_4$, $u_5$, $u_6$ \emph{boundary}.
        \begin{figure}[htb]
            \centering
            \setlength{\unitlength}{1.2mm}
            \begin{picture}(33.5,21.5)(13.25,12.25)
                \Line(24,33)(36,33)
                \Line(24,33)(30,27)
                \Line(24,33)(20,23)
                \Line(30,27)(20,23)
                \Line(30,27)(36,33)
                \Line(30,27)(40,23)
                \Line(30,27)(30,19)
                \Line(20,23)(30,19)
                \Line(20,23)(24,13)
                \Line(24,13)(30,19)
                \Line(24,13)(36,13)
                \Line(30,19)(40,23)
                \Line(30,19)(36,13)
                \Line(36,13)(40,23)
                \Line(40,23)(36,33)
                \put(24,33){\circle*{1.5}}
                \put(30,27){\circle*{1.5}}
                \put(20,23){\circle*{1.5}}
                \put(24,13){\circle*{1.5}}
                \put(30,19){\circle*{1.5}}
                \put(36,13){\circle*{1.5}}
                \put(40,23){\circle*{1.5}}
                \put(36,33){\circle*{1.5}}
                \put(28.8,29){$c_1$}
                \put(29.3,16.3){$c_2$}
                \put(37.5,32){$u_1$}
                \put(41.5,22.5){$u_2$}
                \put(37.5,13){$u_3$}
                \put(20,13){$u_4$}
                \put(16,22.5){$u_5$}
                \put(20,32){$u_6$}
            \end{picture}
            \caption{The graph $G_8$.}
            \label{figure:G_8}
        \end{figure}

        \begin{lemma}\label{lemma:G_8_a}
            We have $a(G_8)=5$.
        \end{lemma}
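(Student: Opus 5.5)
The plan is to prove the two inequalities $a(G_8)\geq 5$ and $a(G_8)\leq 5$ separately. The upper bound will come from a pure edge-counting argument, so no case analysis over subsets is needed. First, read off the edge set from Figure~\ref{figure:G_8}. The boundary vertices form the $6$-cycle $u_1u_2u_3u_4u_5u_6$. The central vertex $c_1$ is adjacent to $u_1,u_2,u_5,u_6$, the central vertex $c_2$ is adjacent to $u_2,u_3,u_4,u_5$, and $c_1c_2$ is an edge. This gives $15$ edges in total.

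For the lower bound, I will exhibit an induced forest on $5$ vertices: the boundary vertices $u_1,u_2,u_3,u_4,u_5$. Two boundary vertices are adjacent only if they are consecutive on the $6$-cycle. Hence these five vertices induce the path $u_1u_2u_3u_4u_5$, which is a forest, and $a(G_8)\geq 5$.

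For the upper bound, I will show that every set of $6$ vertices induces a subgraph containing a cycle. The complement of such a set consists of exactly $2$ vertices $x,y$. The number of edges induced on the remaining $6$ vertices is $15-\deg(x)-\deg(y)+[xy\in E]$. A forest on $6$ vertices has at most $5$ edges, so an induced forest would require $\deg(x)+\deg(y)-[xy\in E]\geq 10$. The degrees are:
\begin{itemize}
\item $\deg(c_1)=\deg(c_2)=5$;
\item $\deg(u_2)=\deg(u_5)=4$;
\item $\deg(u_1)=\deg(u_3)=\deg(u_4)=\deg(u_6)=3$.
\end{itemize}
The only pair with degree sum at least $10$ is $\{c_1,c_2\}$, and it is adjacent, so it removes only $9$ edges. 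Therefore every $6$-vertex induced subgraph has at least $6$ edges and contains a cycle. The same then holds for every larger vertex set, since it contains a $6$-vertex subset, and so $a(G_8)\leq 5$.

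The only point needing care is reading the adjacencies and degrees correctly from the picture. Once they are fixed, the counting argument is immediate, and I do not expect a genuine obstacle.
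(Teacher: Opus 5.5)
Your proof is correct. Your edge list and degrees match Figure~\ref{figure:G_8}: there are $15$ edges and the degree sequence is $5,5,4,4,3,3,3,3$. Your lower bound is the same as the paper's; the upper bound takes a genuinely different route.

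\textbf{The paper's argument.} It argues structurally. A forest on $6$ vertices cannot contain all six boundary vertices, so it contains a central vertex; by the $c_1\leftrightarrow c_2$ symmetry this can be taken to be $c_1$. The triangles $u_1u_2c_1$ and $u_5u_6c_1$ then allow at most two of $u_1,u_2,u_5,u_6$. This forces $c_1,c_2,u_3,u_4$ into the forest, which produces the triangle $c_2u_3u_4$.

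\textbf{Your argument.} You count instead. Deleting two vertices $x,y$ removes $\deg(x)+\deg(y)-[xy\in E]\le 9$ edges, the maximum being attained only by the adjacent pair $\{c_1,c_2\}$. So every $6$-vertex induced subgraph has at least $6$ edges and hence contains a cycle.

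\textbf{Comparison.} Your version is shorter, needs no symmetry reduction or case split, and is mechanically checkable once the adjacency is fixed. The paper's version exhibits the specific triangles that obstruct large forests. That local, cycle-hunting reasoning is what the subsequent Lemmas~\ref{lemma:G_8_induced_forests_5_1}--\ref{lemma:G_8_induced_forests_4} require. Those lemmas concern forests of size at most $a(G_8)$, where one must track which vertices appear and how they are connected. An edge count does not see that information.
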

        \begin{proof}
            We directly verify that the $5$ vertices $u_1$, $u_2$, $u_3$, $u_4$, $u_5$ induce a forest in $G_8$.

            It remains to prove that there are no induced forests on at least $6$ vertices. Suppose, to the contrary, that there exists an induced forest $F$ on at least $6$ vertices. We observe that $F$ cannot contain all the $6$ boundary vertices because they form a cycle. Therefore, $F$ contains at most $5$ boundary vertices and hence at least $1$ central vertex. Since $G_8$ is symmetric under the simultaneous swapping between $c_1$ and $c_2$, $u_1$ and $u_3$, $u_6$ and $u_4$, we can assume without loss of generality that $F$ contains $c_1$. Then $F$ cannot contain both $u_1$ and $u_2$ because $u_1u_2c_1$ is a triangle. Similarly, $F$ cannot contain both $u_5$ and $u_6$ because $u_5u_6c_1$ is a triangle. So, $F$ contains at most $2$ vertices among $u_1$, $u_2$, $u_5$, $u_6$. This implies that $F$ must contain all the remaining $4$ vertices $c_1$, $c_2$, $u_3$, $u_4$ in order to have at least $6$ vertices in total. But then $c_2u_3u_4$ is a triangle in $F$, which is a contradiction that concludes the proof.
        \end{proof}

        \begin{lemma}\label{lemma:G_8_induced_forests_5_1}
            Every induced forest on $5$ vertices in $G_8$ contains at least $2$ vertices among $u_1$, $u_2$, $u_3$ that belong to the same connected component of that forest and at least $2$ vertices among $u_4$, $u_5$, $u_6$ that belong to the same connected component of that forest.
        \end{lemma}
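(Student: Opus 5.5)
The plan is a short case analysis on how many central vertices the forest contains, after first using a symmetry of $G_8$ to reduce the statement to its first half. Reading off Figure~\ref{figure:G_8}, $c_1$ is adjacent to $u_1,u_2,u_5,u_6,c_2$, $c_2$ is adjacent to $u_2,u_3,u_4,u_5,c_1$, and the boundary vertices form the cycle $u_1u_2u_3u_4u_5u_6$.

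\emph{Reductions.} Besides the symmetry used in Lemma~\ref{lemma:G_8_a} (swapping $c_1\leftrightarrow c_2$, $u_1\leftrightarrow u_3$, $u_6\leftrightarrow u_4$), $G_8$ also has the reflection fixing $c_1,c_2$ and swapping $u_1\leftrightarrow u_6$, $u_2\leftrightarrow u_5$, $u_3\leftrightarrow u_4$. One checks directly that this map preserves the neighbourhoods of $c_1$ and $c_2$ and the boundary cycle. The reflection exchanges $\{u_1,u_2,u_3\}$ with $\{u_4,u_5,u_6\}$. Hence it suffices to prove that every induced forest $F$ on $5$ vertices contains two vertices of $\{u_1,u_2,u_3\}$ in the same component. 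The first symmetry maps $\{u_1,u_2,u_3\}$ to itself, so it remains available inside the case analysis.

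\emph{Case analysis on $|F\cap\{c_1,c_2\}|$.}
\begin{itemize}
\item \textbf{Both central vertices.} The triangles $c_1c_2u_2$ and $c_1c_2u_5$ exclude $u_2$ and $u_5$. The triangle $c_1u_1u_6$ allows at most one of $u_1,u_6$, and $c_2u_3u_4$ allows at most one of $u_3,u_4$. So $|F|\le 4$, a contradiction.
\item \textbf{No central vertex.} Then $F$ consists of $5$ of the $6$ boundary vertices. This is the $6$-cycle minus one vertex, which is a connected path. At least two of $u_1,u_2,u_3$ survive, and they lie in this single component.
\item \textbf{Exactly one central vertex.} By the first symmetry, assume $c_1\in F$ and $c_2\notin F$. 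As in Lemma~\ref{lemma:G_8_a}, at most one of $u_1,u_2$ and at most one of $u_5,u_6$ lie in $F$. Reaching $5$ vertices therefore forces $u_3,u_4\in F$, together with exactly one vertex from each of those pairs.
\begin{itemize}
\item If $u_2\in F$, the adjacent vertices $u_2,u_3$ give the required pair.
\item If $u_1\in F$, then $u_6\notin F$ because $c_1u_1u_6$ is a triangle, so $u_5\in F$. Now $u_3u_4u_5c_1u_1$ is a path in $F$, so $u_1$ and $u_3$ lie in the same component.
\end{itemize}
\end{itemize}

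I expect no real obstacle here. The only step needing care is verifying that the reflection is an automorphism of $G_8$, since it is not the one used in Lemma~\ref{lemma:G_8_a}; the rest is a short enumeration.
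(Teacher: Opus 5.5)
Your proof is correct and follows essentially the same route as the paper. Both reduce to $\{u_1,u_2,u_3\}$ by the left--right symmetry, split on whether $F$ contains a central vertex (taking $c_1\in F$ without loss of generality), use the same triangles, and end with the same path $u_3u_4u_5c_1u_1$. The differences are only organizational: you argue directly rather than by contradiction, you dispose of the case $c_1,c_2\in F$ up front by counting, and you explicitly check the reflection that the paper merely calls ``symmetrical.''
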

        \begin{proof}
            It is enough to prove the part for $u_1$, $u_2$, $u_3$, as the part for $u_4$, $u_5$, $u_6$ is symmetrical and can be proved similarly.
            
            Suppose, to the contrary, that there exists an induced forest $F$ on $5$ vertices in $G_8$ without at least $2$ vertices among $u_1$, $u_2$, $u_3$ belonging to the same connected component of $F$.

            Consider the case where $F$ contains a central vertex. Since $G_8$ is symmetric under the simultaneous swapping between $c_1$ and $c_2$, $u_1$ and $u_3$, $u_6$ and $u_4$, we can assume without loss of generality that $F$ contains $c_1$. Then $F$ cannot contain both $u_5$ and $u_6$ because then $u_5u_6c_1$ would be a triangle in $F$. Consider the subcase where $F$ contains $u_2$. Then $F$ cannot contain $u_1$ because $u_1$ and $u_2$ are adjacent and hence they would be the two required vertices among $u_1$, $u_2$, $u_3$. Similarly, $F$ cannot contain $u_3$ because then $u_2$ and $u_3$ would be the two required vertices. So, we found $3$ vertices that $F$ cannot contain: $u_1$, $u_3$, and either $u_5$ or $u_6$. Therefore, in order to have $5$ vertices in total, $F$ must contain all the remaining $5$ vertices. In particular, it must contain $c_2$. But then $u_2c_1c_2$ is a triangle in $F$. This concludes the proof for the subcase where $F$ contains $u_2$. Therefore, $F$ cannot contain $u_2$. Now, consider the subcase where $F$ does not contain $u_1$. Then we have $3$ vertices that $F$ does not contain: $u_2$, $u_1$, and either $u_5$ or $u_6$. Therefore, in order to have $5$ vertices in total, $F$ must contain all the remaining $5$ vertices. In particular, it must contain the vertices $c_2$, $u_3$, $u_4$ that form a triangle. This concludes the proof for the subcase where $F$ does not contain $u_1$. Therefore, $F$ must contain $u_1$. Then $F$ cannot contain $u_6$ because then $u_1u_6c_1$ would be a triangle in $F$. Now, $F$ cannot contain both $c_2$ and $u_5$ because then $c_1c_2u_5$ would be a triangle in $F$. So, we found $3$ vertices that $F$ does not contain: $u_2$, $u_6$, and either $c_2$ or $u_5$. Therefore, in order to have $5$ vertices in total, $F$ must contain all the remaining $5$ vertices. In particular, it must contain the vertices $u_3$ and $u_4$. Then $F$ cannot contain $c_2$ because then $u_3u_4c_2$ would be a triangle in $F$. Then it must contain the remaining vertex $u_5$. Then $u_1$ and $u_3$ are the required two vertices among $u_1$, $u_2$, $u_3$ because they are connected by the path $u_1c_1u_5u_4u_3$ in $F$. This concludes the proof for the case where $F$ contains a central vertex.

            The only remaining case is where all vertices of $F$ are boundary. Since $F$ contains $5$ vertices and there are exactly $6$ boundary vertices, $F$ omits exactly $1$ boundary vertex. If $F$ contains both $u_1$ and $u_2$, then $u_1$ and $u_2$ are the two required vertices among $u_1$, $u_2$, $u_3$. Therefore, $F$ must omit either $u_1$ or $u_2$. Since, $F$ can omit only $1$ boundary vertex, it must contain all other $4$ boundary vertices $u_3$, $u_4$, $u_5$, $u_6$. If $F$ omits $u_1$, then $u_2$ and $u_3$ are the two required vertices. If $F$ omits $u_2$, then $u_1$ and $u_3$ are the two required vertices connected by the path $u_3u_4u_5u_6u_1$ in $F$. This concludes the proof of the case where all vertices of $F$ are boundary and the proof of the entire lemma.
        \end{proof}

        \begin{lemma}\label{lemma:G_8_induced_forests_5_2}
            Every induced forest on $5$ vertices in $G_8$ contains either both $u_1$ and $u_6$ or both $u_3$ and $u_4$.
        \end{lemma}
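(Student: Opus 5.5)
The plan is a case analysis on how many central vertices the forest contains, using the edge set read off from Figure~\ref{figure:G_8}. The boundary cycle is $u_1u_2u_3u_4u_5u_6$. The vertex $c_1$ is adjacent to $u_1,u_2,u_5,u_6,c_2$, and $c_2$ is adjacent to $u_2,u_3,u_4,u_5,c_1$. The symmetry used in the proof of Lemma~\ref{lemma:G_8_a} swaps $c_1\leftrightarrow c_2$, $u_1\leftrightarrow u_3$, $u_6\leftrightarrow u_4$. It therefore exchanges the two pairs $\{u_1,u_6\}$ and $\{u_3,u_4\}$, so the conclusion is invariant under it and we may apply it freely. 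Let $F$ be an induced forest on $5$ vertices.

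\emph{Case 1: $F$ has no central vertex.} Then $F$ consists of $5$ of the $6$ boundary vertices. The single omitted vertex lies in at most one of the disjoint pairs $\{u_1,u_6\}$ and $\{u_3,u_4\}$, so the other pair is fully contained in $F$.

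\emph{Case 2: $F$ contains both $c_1$ and $c_2$.} The triangles $c_1c_2u_2$ and $c_1c_2u_5$ exclude $u_2$ and $u_5$ from $F$. So the remaining $3$ vertices of $F$ are taken from $\{u_1,u_6,u_3,u_4\}$. These four vertices split into the two pairs $\{u_1,u_6\}$ and $\{u_3,u_4\}$, so by pigeonhole one pair lies entirely in $F$.

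\emph{Case 3: $F$ contains exactly one central vertex.} By the symmetry we may assume it is $c_1$. Then $F$ contains exactly $4$ boundary vertices. If both $u_3,u_4\in F$ we are done. Otherwise at most one of $u_3,u_4$ is in $F$, so at least $3$ vertices of $F$ lie in $N(c_1)\cap\{u_1,\dots,u_6\}=\{u_1,u_2,u_5,u_6\}$. Any two adjacent neighbours of $c_1$ in $F$ would form a triangle with $c_1$. Hence these $3$ vertices must be pairwise non-adjacent. However, $\{u_1,u_2,u_5,u_6\}$ induces the path $u_2u_1u_6u_5$, whose independence number is $2$, which is a contradiction.

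No step is a real obstacle. The only care needed is reading the edge list correctly from the figure, in particular the edges $u_1u_6$, $c_1u_6$ and $c_2u_5$, and checking that the symmetry indeed exchanges the two target pairs.
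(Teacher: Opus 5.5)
Your proof is correct and is essentially the paper's argument: the same $c_1\leftrightarrow c_2$ symmetry and the same triangles do all the work, namely $c_1c_2u_2$, $c_1c_2u_5$, and those formed by $c_1$ with adjacent boundary neighbours. The paper instead assumes the conclusion fails and first forces $F$ to contain three of $c_1,c_2,u_2,u_5$ (hence exactly one central vertex together with $u_2,u_5$), whereas you split directly on the number of central vertices and handle the one-central case via the independence number of the path $u_2u_1u_6u_5$.
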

        \begin{proof}
            Suppose, to the contrary, that there exists an induced forest $F$ on $5$ vertices in $G_8$ that contains at most $1$ vertex among $u_1$ and $u_6$ and at most $1$ vertex among $u_3$ and $u_4$. Then, in order to have $5$ vertices in total, $F$ must omit at most $1$ vertex of the remaining $4$ vertices $c_1$, $c_2$, $u_2$, $u_5$. Observe that $F$ cannot contain all $4$ of these vertices or omit $u_5$ because then $c_1c_2u_2$ would be a triangle in $F$. Also, $F$ cannot omit $u_2$ because then $c_1c_2u_5$ would be a triangle in $F$. Therefore, $F$ can only omit either $c_1$ or $c_2$. These two cases are symmetrical under the simultaneous swapping between $c_1$ and $c_2$, $u_1$ and $u_3$, $u_6$ and $u_4$. So, we can assume, without loss of generality that $F$ omits $c_2$. Then, $F$ cannot contain $u_1$ because then $u_1u_2c_1$ would be a triangle in $F$. Similarly, $F$ cannot contain $u_6$ because then $u_5u_6c_1$ would be a triangle in $F$. So, $F$ contains the vertices $u_2$, $c_1$, $u_5$, omits $c_2$, $u_1$, $u_6$, and contains at most $1$ vertex among $u_3$ and $u_4$. So, $F$ contains at most $4$ vertices in total, which is a contradiction that concludes the proof.
        \end{proof}
        
        \begin{lemma}\label{lemma:G_8_induced_forests_4}
            For every induced forest $F$ on at least $4$ vertices in $G_8$, at least one of the following options hold:
            \begin{enumerate}
                \item $F$ contains at least $2$ vertices among $u_1$, $u_6$, $u_3$, $u_4$ that belong to the same connected component of $F$;

                \item $F$ contains the vertices $u_1$, $u_2$, $u_4$, $u_5$;

                \item $F$ contains the vertices $u_2$, $u_3$, $u_5$, $u_6$;

                \item $F$ contains the vertices $u_2$, $u_5$, and at least one vertex among $u_1$, $u_6$, $u_3$, $u_4$ such that all three of them belong to the same connected component of $F$;

                \item $F$ contains the vertices $u_2$, $c_1$, $u_6$, $u_4$;

                \item $F$ contains the vertices $u_1$, $c_1$, $u_5$, $u_3$;

                \item $F$ contains the vertices $u_3$, $c_2$, $u_5$, $u_1$;

                \item $F$ contains the vertices $u_4$, $c_2$, $u_2$, $u_6$.
            \end{enumerate}
        \end{lemma}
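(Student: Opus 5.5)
The first step is to reduce to forests on exactly $4$ vertices. Each of the eight options is monotone under enlarging the forest. If $F' \subseteq F$ are induced forests, every vertex of $F'$ lies in $F$. Every connected component of $F'$ lies inside a single component of $F$, since a path in $F'$ is a path in $F$. So if $F'$ satisfies one of the options, $F$ satisfies the same option. Every induced forest on at least $4$ vertices contains an induced subforest on exactly $4$ vertices, because any vertex subset of an induced forest again induces a forest. Hence it suffices to prove the lemma for $|V(F)|=4$.

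Next I will use the symmetries of $G_8$ to cut down the cases. Reading off the edges of Figure~\ref{figure:G_8}, there is the symmetry already used in the paper, swapping $c_1\leftrightarrow c_2$, $u_1\leftrightarrow u_3$, $u_6\leftrightarrow u_4$. There is also the reflection fixing $c_1,c_2$ and swapping $u_1\leftrightarrow u_6$, $u_2\leftrightarrow u_5$, $u_3\leftrightarrow u_4$, and one checks that it preserves all neighbourhoods. Together they generate a group of order $4$. This group permutes options $2,3$ among themselves, options $5$--$8$ among themselves, and preserves options $1$ and $4$. So I may argue up to this group. I then split by the number of central vertices in $F$.

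\textbf{Both central vertices.} $F$ contains neither $u_2$ nor $u_5$, because $c_1c_2u_2$ and $c_1c_2u_5$ are triangles. So the other two vertices lie in $\{u_1,u_6,u_3,u_4\}$. The pairs $\{u_1,u_6\}$ and $\{u_3,u_4\}$ are excluded by the triangles with $c_1$ and with $c_2$, respectively. Every remaining pair, such as $u_1,u_3$, is joined through the edge $c_1c_2$ together with the edges from $c_1$ to $u_1,u_6$ and from $c_2$ to $u_3,u_4$. This gives option 1.

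\textbf{No central vertex.} $F$ consists of $4$ vertices of the boundary $6$-cycle $u_1u_2u_3u_4u_5u_6$, i.e.\ the cycle with two vertices removed. I will enumerate the $15$ choices up to symmetry. If the removed pair avoids $\{u_2,u_5\}$, or removes exactly one of them, some two vertices of $\{u_1,u_6,u_3,u_4\}$ stay on a common arc, giving option 1. Otherwise the removed pair is $\{u_2,u_5\}$ and the arcs $u_1u_6$ and $u_3u_4$ survive, again option 1. The only apparent exceptions are choices like $\{u_1,u_2,u_4,u_5\}$ and $\{u_2,u_3,u_5,u_6\}$, which are exactly options 2 and 3.

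\textbf{Exactly one central vertex.} By the first symmetry I may take it to be $c_1$, with three boundary vertices. The triangles $c_1u_1u_2$, $c_1u_5u_6$, $c_1u_1u_6$ forbid the pairs $\{u_1,u_2\}$, $\{u_5,u_6\}$, $\{u_1,u_6\}$. I will list the admissible triples using the reflection symmetry and check each against the options. Triples containing $u_2$ and $u_5$ together with a third vertex give option 4, since $c_1$ joins $u_2$, $u_5$ and any of $u_1,u_6$, and $u_3$, $u_4$ attach to $u_2$ or $u_5$. Triples such as $\{u_2,u_6,u_4\}$ and $\{u_1,u_5,u_3\}$ give options 5 and 6. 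The remaining triples put two vertices of $\{u_1,u_6,u_3,u_4\}$ in one component, via $c_1$ or via a boundary edge, giving option 1. Options 7 and 8 arise as the symmetric images for $c_2$.

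I expect the main obstacle to be the bookkeeping in the single-central case. Several triples are connected only through a path such as $u_6c_1u_2u_3$, and I must check carefully which of these really lack two vertices of $\{u_1,u_6,u_3,u_4\}$ in a common component.
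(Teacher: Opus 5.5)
Your proposal is correct and follows essentially the paper's route: the same split by the number of central vertices, the same symmetries, and an exhaustive check. It differs only in that you first reduce to $|V(F)|=4$ by monotonicity of the options and enumerate the $4$-sets directly, whereas the paper argues by contradiction and uses the failure of option~1 (at most one of $u_1,u_6$ and at most one of $u_3,u_4$) to prune the cases.

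One wording fix is needed in your boundary case. The removed pairs $\{u_3,u_6\}$ and $\{u_1,u_4\}$ avoid $\{u_2,u_5\}$, yet they do not give option~1; they yield exactly the option~2 and option~3 sets. Your first claim there should exclude them explicitly rather than treat them as ``apparent'' exceptions.
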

        \begin{proof}
            Suppose, to the contrary, that there exists an induced forest $F$ on at least $4$ vertices in $G_8$ not satisfying any of the $8$ options in the statement of the lemma. Since $F$ does not satisfy option 1, it contains at most $1$ vertex among $u_1$ and $u_6$ and at most $1$ vertex among $u_3$ and $u_4$.

            Consider the case where $F$ contains both central vertices $c_1$ and $c_2$. Then $F$ cannot contain $u_2$ because then $c_1c_2u_2$ would be a triangle in $F$. Similarly, $F$ cannot contain $u_5$ because then $c_1c_2u_5$ would be a triangle in $F$. So, $F$ contains $c_1$, $c_2$, at most $1$ vertex among $u_1$ and $u_6$, at most $1$ vertex among $u_3$ and $u_4$ and omits $u_2$ and $u_5$. Therefore, in order for $F$ to contain at least $4$ vertices in total, it must contain exactly $1$ vertex among $u_1$ and $u_6$, which we denote by $x$, and exactly $1$ vertex among $u_3$ and $u_4$, which we denote by $y$. Then option 1 from the statement of the lemma holds for the vertices $x$ and $y$ that are connected through $c_1$ and $c_2$.
            
            Consider the case where $F$ contains exactly $1$ central vertex. Since $G_8$ is symmetric under the simultaneous swapping between $c_1$ and $c_2$, $u_1$ and $u_3$, $u_6$ and $u_4$, we can assume without loss of generality that $F$ contains $c_1$ and does not contain $c_2$. In order for $F$ to contain at least $4$ vertices in total, it must contain either $u_2$ or $u_5$. Because of the symmetry, we can assume without loss of generality that $F$ contains $u_2$. Then $F$ cannot contain $u_1$ because then $u_2u_1c_1$ would be a triangle in $F$. Consider the subcase where $F$ contains $u_6$. Then $F$ cannot contain $u_5$ because then $u_5u_6c_1$ would be a triangle in $F$. So, $F$ contains $u_2$, $c_1$, $u_6$, at most $1$ vertex among $u_3$ and $u_4$, and omits $u_1$, $u_5$, $c_2$. In order for $F$ to contain at least $4$ vertices in total, it must contain exactly $1$ vertex among $u_3$ and $u_4$. If $F$ contains $u_3$, then option 1 from the statement of the lemma holds for the vertices $u_3$ and $u_6$ connected through $u_2$ and $c_1$. If $F$ contains $u_4$, then option 5 holds. This concludes the proof of the subcase where $F$ contains $u_6$. Therefore, $F$ does not contain $u_6$. In order for $F$ to contain at least $4$ vertices in total, it must contain $u_5$ and exactly $1$ vertex among $u_3$ and $u_4$. If $F$ contains $u_4$, then option 4 holds for the vertices $u_2$, $u_5$ and $u_4$. If $F$ contains $u_3$, then option 4 holds for the vertices $u_2$, $u_5$, $u_3$. This concludes the proof of the case where $F$ contains exactly $1$ central vertex.

            The only remaining case is where all vertices of $F$ are boundary. Since we previously established that $F$ contains at most $1$ vertex among $u_1$ and $u_6$ and at most $1$ vertex among $u_3$ and $u_4$, in order for $F$ to contain at least $4$ vertices in total, it must contain both of the remaining two boundary vertices $u_2$ and $u_5$, and it must contain exactly $1$ vertex among $u_1$ and $u_6$ and exactly $1$ vertex among $u_3$ and $u_4$. Now, we enumerate all possible pairs of vertices one of which is among $u_1$ and $u_6$ and another is among $u_3$ and $u_4$. If $F$ contains $u_1$ and $u_4$, then option 2 from the statement of the lemma holds. If $F$ contains $u_6$ and $u_3$, then option 3 holds. If $F$ contains $u_1$ and $u_3$, then option 1 holds for the vertices $u_1$ and $u_3$ that are connected through $u_2$. If $F$ contains $u_6$ and $u_4$, then option 1 holds for the vertices $u_6$ and $u_4$ that are connected through $u_5$. So, for all possible pairs, one of the options from the statement of the lemma holds. This concludes the proof for the case where all vertices of $F$ are boundary and the proof of the entire lemma.
        \end{proof}

        We add $4$ vertices $v_1$, $v_2$, $v_3$, $v_4$ to $G_8$ as shown in Figure~\ref{figure:G_12} and denote the resulting planar graph on $12$ vertices by $G_{12}$.
        \begin{figure}[htb]
            \centering
            \setlength{\unitlength}{0.8mm}
            \begin{picture}(57.5,47.5)(1.25,-0.75)
                \Line(53,23)(30,0)
                \Line(53,23)(36,13)
                \Line(53,23)(40,23)
                \Line(53,23)(36,33)
                \Line(53,23)(30,46)
                \Line(24,33)(7,23)
                \Line(24,33)(30,46)
                \Line(24,33)(36,33)
                \Line(24,33)(30,27)
                \Line(24,33)(20,23)
                \Line(30,27)(20,23)
                \Line(30,27)(36,33)
                \Line(30,27)(40,23)
                \Line(30,27)(30,19)
                \Line(20,23)(30,19)
                \Line(20,23)(24,13)
                \Line(20,23)(7,23)
                \Line(7,23)(24,13)
                \Line(7,23)(30,0)
                \Line(7,23)(30,46)
                \Line(24,13)(30,19)
                \Line(24,13)(36,13)
                \Line(24,13)(30,0)
                \Line(30,19)(40,23)
                \Line(30,19)(36,13)
                \Line(36,13)(30,0)
                \Line(36,13)(40,23)
                \Line(40,23)(36,33)
                \Line(36,33)(30,46)
                \put(53,23){\circle*{1.5}}
                \put(24,33){\circle*{1.5}}
                \put(30,27){\circle*{1.5}}
                \put(20,23){\circle*{1.5}}
                \put(7,23){\circle*{1.5}}
                \put(24,13){\circle*{1.5}}
                \put(30,19){\circle*{1.5}}
                \put(36,13){\circle*{1.5}}
                \put(40,23){\circle*{1.5}}
                \put(36,33){\circle*{1.5}}
                \put(30,46){\circle*{1.5}}
                \put(30,0){\circle*{1.5}}
                \put(55,22){$v_1$}
                \put(32.5,-1){$v_2$}
                \put(1.5,22){$v_3$}
                \put(32.5,45){$v_4$}
            \end{picture}
            \caption{The graph $G_{12}$.}
            \label{figure:G_12}
        \end{figure}

        \begin{lemma}\label{lemma:G_12_a}
            We have $a(G_{12})=6$.
        \end{lemma}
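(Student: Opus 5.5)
The plan is to prove the two inequalities $a(G_{12}) \geq 6$ and $a(G_{12}) \leq 6$ separately, reading the adjacencies of the new vertices off Figure~\ref{figure:G_12}. The added edges are:
\begin{itemize}
\item $v_1$ is adjacent to $u_1,u_2,u_3,v_2,v_4$;
\item $v_3$ is adjacent to $u_4,u_5,u_6,v_2,v_4$;
\item $v_2$ is adjacent to $u_3,u_4,v_1,v_3$;
\item $v_4$ is adjacent to $u_6,u_1,v_1,v_3$.
\end{itemize}
In particular $v_1v_2v_3v_4$ is a $4$-cycle, and $u_1u_6v_4$ and $u_3u_4v_2$ are triangles, since $u_1u_6$ and $u_3u_4$ are edges of $G_8$.

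\textbf{Lower bound.} Take the path $u_1u_2u_3u_4u_5$, which is induced in $G_8$ by Lemma~\ref{lemma:G_8_a}'s example, and add $v_4$. Among these vertices $v_4$ is adjacent only to $u_1$, so the six vertices induce a tree.

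\textbf{Upper bound.} Suppose $F$ is an induced forest on at least $7$ vertices. By Lemma~\ref{lemma:G_8_a}, $F$ has at most $5$ vertices in $G_8$. The $v_i$ form a $4$-cycle, so $F$ contains at most $3$ of them. Hence $F$ contains either exactly $2$ of the $v_i$ and $5$ vertices of $G_8$, or exactly $3$ of the $v_i$ and at least $4$ vertices of $G_8$.

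\emph{Two $v_i$.} If $v_1 \in F$, Lemma~\ref{lemma:G_8_induced_forests_5_1} gives two vertices among $u_1,u_2,u_3$ in the same component of $F\cap G_8$. Both are adjacent to $v_1$, so they close a cycle. Symmetrically $v_3 \notin F$, using $u_4,u_5,u_6$. Thus the two $v_i$ are $v_2$ and $v_4$. Lemma~\ref{lemma:G_8_induced_forests_5_2} then yields either both $u_1,u_6$, giving the triangle $u_1u_6v_4$, or both $u_3,u_4$, giving the triangle $u_3u_4v_2$.

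\emph{Three $v_i$.} The set must contain $v_1$, $v_3$ and one of $v_2,v_4$, so $v_1$ and $v_3$ are joined by a path through the $v_i$. Every vertex $u_1,u_2,u_3$ is adjacent to $v_1$, and every vertex $u_4,u_5,u_6$ is adjacent to $v_3$. Hence any two boundary vertices lying in the same component of $F \cap G_8$ close a cycle through the $v$-path, or through a common $v$-neighbour. I then apply Lemma~\ref{lemma:G_8_induced_forests_4} to $F\cap G_8$ and check each option:
\begin{itemize}
\item Option 1 gives two such boundary vertices in one component, hence a cycle.
\item Options 2 and 3 contain the adjacent pair $u_1,u_2$ or $u_2,u_3$, which forms a triangle with $v_1$.
\item Option 4 puts $u_2$ and $u_5$ in one component, and they attach to $v_1$ and $v_3$ respectively, closing a cycle.
\item In options 5--8 the central vertex joins two boundary vertices. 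For example, in option 6 the path $u_1c_1u_5$, the edges $u_5v_3$ and $u_1v_1$, and the $v$-path give a cycle. The other three options are handled the same way, e.g. in option 5 the path is $u_2c_1u_6$.
\end{itemize}

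The main obstacle is purely bookkeeping: correctly extracting the adjacencies from the figure and verifying each of the eight options of Lemma~\ref{lemma:G_8_induced_forests_4}. This is routine once the observation is made that every boundary vertex hangs off $v_1$ or $v_3$.
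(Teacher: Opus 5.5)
Your adjacency list, your lower-bound witness $v_4u_1u_2u_3u_4u_5$ (an induced path, so a valid alternative to the paper's set), and your treatment of the case with exactly two $v_i$ are all correct. The gap is in the three-$v_i$ case. Your claim that any three of $v_1,v_2,v_3,v_4$ must include both $v_1$ and $v_3$ is false. Deleting one vertex of the $4$-cycle $v_1v_2v_3v_4$ leaves both vertices of \emph{some} opposite pair, and that pair may be $\{v_2,v_4\}$. For instance, if $F$ omits $v_3$, then $F\supseteq\{v_1,v_2,v_4\}$. Omitting $v_1$ is the mirror image of this under the symmetry $v_1\leftrightarrow v_3$, $u_1\leftrightarrow u_6$, $u_2\leftrightarrow u_5$, $u_3\leftrightarrow u_4$.

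In this missed case your key observation fails, because $u_5$ has no neighbour among $v_1,v_2,v_4$. Your arguments for option 4 and option 6 close the cycle through the edge $u_5v_3$, and the ``same way'' arguments for options 5, 7 and 8 close it through $u_6v_3$, $u_5v_3$ and $u_4v_3$. None of these edges lies in $F$ when $v_3\notin F$. As written, the upper bound $a(G_{12})\le 6$ is therefore not established.

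The case can be repaired in the spirit of your idea, and this is essentially what the paper does: it treats ``omit $v_3$'' and ``omit $v_4$'' separately, using $v_1,v_2\in F$ in both. When $F\supseteq\{v_1,v_2,v_4\}$, the vertices $v_2v_1v_4$ form a path in $F$, and each of $u_1,u_2,u_3,u_4,u_6$ has a neighbour on it. Hence any two of these five vertices lying in one component of $F\cap G_8$ close a cycle. This settles the options of Lemma~\ref{lemma:G_8_induced_forests_4} as follows.
\begin{itemize}
\item Options 1, 2 and 3 follow directly.
\item For option 4, pair $u_2$ with the third vertex from $u_1,u_6,u_3,u_4$ instead of with $u_5$.
\item Option 5 gives the cycle $v_1u_2c_1u_6v_4$.
\item Option 8 gives the cycle $v_1u_2c_2u_4v_2$.
\item Options 6 and 7 both contain $u_3$, which forms the triangle $v_1v_2u_3$.
\end{itemize}
You need to add this case to complete the proof.
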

        \begin{proof}
            We directly verify that the $6$ vertices $v_2$, $u_3$, $u_2$, $u_5$, $u_6$, $v_4$ induce a forest in $G_{12}$.

            It remains to prove that there are no induced forests in $G_{12}$ on at least $7$ vertices. Suppose, to the contrary, that there exists an induced forest $F$ on at least $7$ vertices. By Lemma~\ref{lemma:G_8_a}, $F$ contains at most $5$ vertices in the internal $G_8$. Therefore, $F$ must contain at least $2$ vertices among $v_1$, $v_2$, $v_3$, $v_4$. On the other hand, $F$ cannot contain all $4$ vertices $v_1$, $v_2$, $v_3$, $v_4$ because they form a cycle. Therefore, $F$ contains either $2$ or $3$ vertices among $v_1$, $v_2$, $v_3$, $v_4$.

            Consider the case where $F$ contains exactly $2$ vertices among $v_1$, $v_2$, $v_3$, $v_4$. Then it contains exactly $5$ vertices in the internal $G_8$. Then, by Lemma~\ref{lemma:G_8_induced_forests_5_1}, $F$ contains at least $2$ vertices, which we denote by $x$ and $y$, among $u_1$, $u_2$, $u_3$ that are connected by a path $p$ in $F$ inside the internal $G_8$. Therefore, $F$ cannot contain $v_1$ because then $v_1xpy$ would be a cycle in $F$. Similarly, by Lemma~\ref{lemma:G_8_induced_forests_5_1}, $F$ contains at least $2$ vertices, which we denote by $z$ and $t$, among $u_4$, $u_5$, $u_6$ that are connected by a path $q$ in $F$ inside the internal $G_8$. Therefore, $F$ cannot contain $v_3$ because then $v_3zqt$ would be a cycle in $F$. Then, the only way for $F$ to contain exactly $2$ vertices among $v_1$, $v_2$, $v_3$, $v_4$ is to contain both $v_2$ and $v_4$. By Lemma~\ref{lemma:G_8_induced_forests_5_2}, $F$ contains either both $u_1$ and $u_6$ or both $u_3$ and $u_4$. If it contains both $u_1$ and $u_6$, then $v_4u_1u_6$ is a triangle in $F$, and, if it contains both $u_3$ and $u_4$, then $v_2u_3u_4$ is a triangle in $F$, which is a contradiction that concludes the proof of the case where $F$ contains exactly $2$ vertices among $v_1$, $v_2$, $v_3$, $v_4$.

            The only remaining case is where $F$ contains exactly $3$ vertices among $v_1$, $v_2$, $v_3$, $v_4$ and thus omits exactly $1$ vertex among them. Then it contains at least $4$ vertices in the internal $G_8$. Because of the symmetry, we can consider only two possibilities: that either $F$ omits $v_3$ or $F$ omits $v_4$. The other two possibilities are considered similarly. By Lemma~\ref{lemma:G_8_induced_forests_4}, we have one of the $8$ options listed there. We consider each of them separately.

            Consider the subcase where option 1 holds, that is, where $F$ contains at least $2$ vertices among $u_1$, $u_6$, $u_3$, $u_4$ that belong to the same connected component of $F$. If one of these two vertices is $u_3$, then $v_1v_2u_3$ is a triangle in $F$. If these two vertices are $u_1$ and $u_6$ and $F$ contains $v_4$, then $v_4u_1u_6$ is a triangle in $F$. If these two vertices are $u_1$ and $u_6$ and $F$ omits $v_4$ and thus contains $v_3$, then $u_1v_1v_2v_3u_6$ is a cycle in $F$. If these two vertices are $u_1$ and $u_4$ connected by a path $p$ in $F$ inside the internal $G_8$, then $v_1u_1pu_4v_2$ is a cycle in $F$. If these two vertices are $u_6$ and $u_4$ and $F$ contains $v_3$, then $v_2v_3u_4$ is a triangle in $F$. If these two vertices are $u_6$ and $u_4$ connected by a path $p$ in $F$ inside the internal $G_8$ and $F$ omits $v_3$ and thus contains $v_4$, then $v_4v_1v_2u_4pu_6$ is a cycle in $F$. This concludes the proof of the subcase where option 1 holds.

            Consider the subcase where option 2 holds, that is, where $F$ contains the vertices $u_1$, $u_2$, $u_4$, $u_5$. Then $v_1u_1u_2$ is a triangle in $F$.

            Consider the subcase where option 3 holds, that is, where $F$ contains the vertices $u_2$, $u_3$, $u_5$, $u_6$. Then $v_1u_2u_3$ is a triangle in $F$.

            Consider the subcase where option 4 holds, that is, where $F$ contains the vertices $u_2$, $u_5$, and at least one vertex among $u_1$, $u_6$, $u_3$, $u_4$ such that all three of them belong to the same connected component of $F$. If the third vertex is $u_3$, then $v_1u_2u_3$ is a triangle in $F$. If the third vertex is $u_4$ and $u_2$ is connected with $u_4$ by a path $p$ in $F$ inside the internal $G_8$, then $v_1u_2pu_4v_2$ is a cycle in $F$. If the third vertex is $u_1$, then $v_1u_2u_1$ is a triangle in $F$. If the third vertex is $u_6$ and $F$ contains $v_3$, then $v_3u_5u_6$ is a triangle in $F$. If the third vertex is $u_6$ and $F$ omits $v_3$ and thus contains $v_4$, and $u_2$ and $u_6$ are connected by a path $p$ in $F$ inside the internal $G_8$, then $v_1u_2pu_6v_4$ is a cycle in $F$. This concludes the proof of the subcase where option 4 holds.

            Consider the subcase where option 5 holds, that is, where $F$ contains the vertices $u_2$, $c_1$, $u_6$, $u_4$. If $F$ contains $v_3$, then $v_2v_3u_4$ is a triangle in $F$. If $F$ omits $v_3$ and thus contains $v_4$, then $v_1u_2c_1u_6v_4$ is a cycle in $F$.

            Consider the subcase where option 6 holds, that is, where $F$ contains the vertices $u_1$, $c_1$, $u_5$, $u_3$. Then $v_1v_2u_3$ is a triangle in $F$.

            Consider the subcase where option 7 holds, that is, where $F$ contains the vertices $u_3$, $c_2$, $u_5$, $u_1$. Then $v_1v_2u_3$ is a triangle in $F$.

            Consider the subcase where option 8 holds, that is, where $F$ contains the vertices $u_4$, $c_2$, $u_2$, $u_6$. Then $v_1v_2u_4c_2u_2$ is a cycle in $F$.

            So, in all subcases, we found a cycle in $F$, which is a contradiction. This concludes the proof of the case where $F$ contains exactly $3$ vertices among $v_1$, $v_2$, $v_3$, $v_4$ and of the entire lemma.
        \end{proof}

        \begin{lemma}\label{lemma:G_12_induced_forests_6}
            Every induced forest in $G_{12}$ on $6$ vertices contains either $v_2$ or $v_4$ or both $v_1$ and $v_3$ such that they belong to the same connected component of that forest.
        \end{lemma}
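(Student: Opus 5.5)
We argue by contradiction, combining a reduction to the internal $G_8$ with the earlier structural lemmas on $G_8$. Suppose $F$ is an induced forest on $6$ vertices in $G_{12}$ that omits both $v_2$ and $v_4$. From Figure~\ref{figure:G_12} we read off the neighbourhoods of the outer vertices:
\begin{itemize}
\item $v_1$ is adjacent to $u_1,u_2,u_3,v_2,v_4$;
\item $v_3$ is adjacent to $u_4,u_5,u_6,v_2,v_4$.
\end{itemize}
We also use the reflection symmetry of $G_{12}$ that swaps $u_1\leftrightarrow u_6$, $u_2\leftrightarrow u_5$, $u_3\leftrightarrow u_4$ and $v_1\leftrightarrow v_3$, and fixes $c_1,c_2,v_2,v_4$. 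One checks directly that this map preserves the edges of $G_8$ as well as the edges to $v_1,\dots,v_4$. Since $F$ contains at most $v_1$ and $v_3$ among the outer vertices, we split according to how many of these two it contains.

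\emph{Neither $v_1$ nor $v_3$.} Then all $6$ vertices of $F$ lie in $G_8$, which contradicts Lemma~\ref{lemma:G_8_a}.

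\emph{Exactly one of them.} By the symmetry we may assume $v_1\in F$ and $v_3\notin F$. Then $F$ has exactly $5$ vertices in $G_8$. By Lemma~\ref{lemma:G_8_induced_forests_5_1}, two of $u_1,u_2,u_3$ are joined by a path of $F$ inside $G_8$. All three of these are neighbours of $v_1$, so adding $v_1$ closes a cycle, a contradiction.

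\emph{Both $v_1$ and $v_3$.} Then $F$ has exactly $4$ vertices in $G_8$, and Lemma~\ref{lemma:G_8_induced_forests_4} applies. In every option we either find a cycle or join $v_1$ and $v_3$ by a path of $F$, which is the desired conclusion (so this case does not lead to a contradiction but to the statement itself).
\begin{itemize}
\item \emph{Option 1.} Two of $u_1,u_6,u_3,u_4$ lie in one component of $F$.
  \begin{itemize}
  \item If the pair is $\{u_1,u_3\}$, both are neighbours of $v_1$, so there is a cycle through $v_1$.
  \item If the pair is $\{u_4,u_6\}$, both are neighbours of $v_3$, so there is a cycle through $v_3$.
  \item Any other pair has one vertex adjacent to $v_1$ and one adjacent to $v_3$, so $v_1$ and $v_3$ lie in one component.
  \end{itemize}
\item \emph{Options 2 and 3.} These give the triangles $v_1u_1u_2$ and $v_1u_2u_3$, respectively.
\item \emph{Option 4.} Here $u_2$ and $u_5$ lie in one component, so $v_1u_2\cdots u_5v_3$ connects $v_1$ and $v_3$.
\item \emph{Options 5--8.} These give, respectively, the paths
\[
v_1u_2c_1u_6v_3,\quad v_1u_1c_1u_5v_3,\quad v_1u_3c_2u_5v_3,\quad v_1u_2c_2u_4v_3 .
\]
These use the edges $c_1u_2,c_1u_6,c_1u_1,c_1u_5,c_2u_3,c_2u_5,c_2u_2,c_2u_4$ of $G_8$. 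Each path connects $v_1$ and $v_3$ in $F$.
\end{itemize}

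The main obstacle is reading off the adjacencies of $v_1,\dots,v_4$ and of $c_1,c_2$ correctly from the figures, and verifying that the reflection really is an automorphism of $G_{12}$. Once these are in place, each case is a one-line check.
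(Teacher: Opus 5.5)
Your proof is correct and takes essentially the same route as the paper. It uses the same split by how many of $v_1,v_3$ lie in $F$, with Lemma~\ref{lemma:G_8_a}, Lemma~\ref{lemma:G_8_induced_forests_5_1}, and the eight options of Lemma~\ref{lemma:G_8_induced_forests_4}; the only differences are cosmetic: you treat the last case directly rather than by contradiction, and you make the reflection symmetry explicit and check it.
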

        \begin{proof}
            Suppose, to the contrary, that there exists an induced forest $F$ on $6$ vertices in $G_{12}$ without the required properties. Then $F$ does not contain $v_2$ and does not contain $v_4$. Also, either $F$ contains at most $1$ vertex among $v_1$ and $v_3$ or $F$ contains both of them, but they belong to different connected components of $F$. We consider these two cases separately.

            Consider the case where $F$ contains at most $1$ vertex among $v_1$ and $v_3$. If $F$ contains neither of them, then all $6$ vertices of $F$ must be inside the internal $G_8$, which contradicts Lemma~\ref{lemma:G_8_a}. Therefore, $F$ must contain exactly $1$ vertex among $v_1$ and $v_3$. Because of the symmetry, without loss of generality, we can assume that $F$ contains $v_1$ and omits $v_3$. Then $F$ contains $5$ vertices in the internal $G_8$. By Lemma~\ref{lemma:G_8_induced_forests_5_1}, $F$ contains at least $2$ vertices, which we denote by $x$ and $y$, among $u_1$, $u_2$, $u_3$ that are connected by a path $p$ in $F$ inside the internal $G_8$. Then $v_1xpy$ is a cycle in $F$, which is a contradiction that concludes the proof of the case where $F$ contains at most $1$ vertex among $v_1$ and $v_3$.

            The only remaining case is where $F$ contains both of $v_1$ and $v_3$, but they belong to different connected components of $F$. By Lemma~\ref{lemma:G_8_induced_forests_4}, we have one of the $8$ options listed there. We consider each of them separately.

            Consider the subcase where option 1 holds, that is, where $F$ contains at least $2$ vertices among $u_1$, $u_6$, $u_3$, $u_4$ that belong to the same connected component of $F$. If these two vertices are $u_1$ and $u_6$, then $v_1$ and $v_3$ are connected through them, contradicting the assumption that $v_1$ and $v_3$ belong to different connected components of $F$. Similarly, if these two vertices are $u_3$ and $u_4$, then $v_1$ and $v_3$ are connected through them. If these two vertices are $u_1$ and $u_3$ and they are connected by a path $p$ in $F$ inside the internal $G_8$, then $v_1u_1pu_3$ is a cycle in $F$. If these two vertices are $u_6$ and $u_4$ and they are connected by a path $p$ in $F$ inside the internal $G_8$, then $v_3u_6pu_4$ is a cycle in $F$. If these two vertices are $u_1$ and $u_4$ and they are connected by a path $p$ in $F$ inside the internal $G_8$, then $v_1$ and $v_3$ are connected by the path $v_1u_1pu_4v_3$ in $F$, contradicting the assumption. If these two vertices are $u_6$ and $u_3$ and they are connected by a path $p$ in $F$ inside the internal $G_8$, then $v_1$ and $v_3$ are connected by the path $v_1u_3pu_6v_3$ in $F$, contradicting the assumption. This concludes the proof of the subcase where option 1 holds.

            Consider the subcase where option 2 holds, that is, where $F$ contains the vertices $u_1$, $u_2$, $u_4$, $u_5$. Then $v_1u_1u_2$ is a triangle in $F$.

            Consider the subcase where option 3 holds, that is, where $F$ contains the vertices $u_2$, $u_3$, $u_5$, $u_6$. Then $v_1u_2u_3$ is a triangle in $F$.

            Consider the subcase where option 4 holds, that is, where $F$ contains the vertices $u_2$, $u_5$, and at least one vertex among $u_1$, $u_6$, $u_3$, $u_4$ such that all three of them belong to the same connected component of $F$. Denote by $p$ the path inside the internal $G_8$ connecting $u_2$ and $u_5$. Then the path $v_1u_2pu_5v_3$ in $F$ connects $v_1$ and $v_3$, contradicting the assumption that they belong to different connected components of $F$.

            Consider the subcase where option 5 holds, that is, where $F$ contains the vertices $u_2$, $c_1$, $u_6$, $u_4$. Then the path $v_1u_2c_1u_6v_3$ in $F$ connects $v_1$ and $v_3$, contradicting the assumption that they belong to different connected components of $F$.

            Consider the subcase where option 6 holds, that is, where $F$ contains the vertices $u_1$, $c_1$, $u_5$, $u_3$. Then the path $v_1u_1c_1u_5v_3$ in $F$ connects $v_1$ and $v_3$, contradicting the assumption that they belong to different connected components of $F$.

            Consider the subcase where option 7 holds, that is, where $F$ contains the vertices $u_3$, $c_2$, $u_5$, $u_1$. Then the path $v_1u_3c_2u_5v_3$ in $F$ connects $v_1$ and $v_3$, contradicting the assumption that they belong to different connected components of $F$.

            Consider the subcase where option 8 holds, that is, where $F$ contains the vertices $u_4$, $c_2$, $u_2$, $u_6$. Then the path $v_1u_2c_2u_4v_3$ in $F$ connects $v_1$ and $v_3$, contradicting the assumption that they belong to different connected components of $F$.

            So, in all subcases, we found either a cycle in $F$ or a path connecting $v_1$ and $v_3$, which is a contradiction. This concludes the proof of the case where $F$ contains both of $v_1$ and $v_3$, but they belong to different connected components of $F$, and of the entire lemma.
        \end{proof}

        We add two vertices $w_1$ and $w_2$ to $G_{12}$ as shown in Figure~\ref{figure:G_14} and denote the resulting planar graph on $14$ vertices by $G_{14}$.
        \begin{figure}[htb]
            \centering
            \setlength{\unitlength}{0.78mm}
            \begin{picture}(154,56.75)(-47,-0.75)
                \Line(53,23)(30,0)
                \Line(53,23)(36,13)
                \Line(53,23)(40,23)
                \Line(53,23)(36,33)
                \Line(53,23)(30,46)
                \Line(53,23)(105,54)
                \Line(24,33)(7,23)
                \Line(24,33)(30,46)
                \Line(24,33)(36,33)
                \Line(24,33)(30,27)
                \Line(24,33)(20,23)
                \Line(30,27)(20,23)
                \Line(30,27)(36,33)
                \Line(30,27)(40,23)
                \Line(30,27)(30,19)
                \Line(20,23)(30,19)
                \Line(20,23)(24,13)
                \Line(20,23)(7,23)
                \Line(7,23)(24,13)
                \Line(7,23)(30,0)
                \Line(7,23)(-45,54)
                \Line(7,23)(30,46)
                \Line(24,13)(30,19)
                \Line(24,13)(36,13)
                \Line(24,13)(30,0)
                \Line(30,19)(40,23)
                \Line(30,19)(36,13)
                \Line(36,13)(30,0)
                \Line(36,13)(40,23)
                \Line(40,23)(36,33)
                \Line(36,33)(30,46)
                \Line(30,46)(-45,54)
                \Line(30,46)(105,54)
                \Line(30,0)(105,54)
                \Line(30,0)(-45,54)
                \Line(105,54)(-45,54)
                \put(53,23){\circle*{1.5}}
                \put(24,33){\circle*{1.5}}
                \put(30,27){\circle*{1.5}}
                \put(20,23){\circle*{1.5}}
                \put(7,23){\circle*{1.5}}
                \put(24,13){\circle*{1.5}}
                \put(30,19){\circle*{1.5}}
                \put(36,13){\circle*{1.5}}
                \put(40,23){\circle*{1.5}}
                \put(36,33){\circle*{1.5}}
                \put(30,46){\circle*{1.5}}
                \put(30,0){\circle*{1.5}}
                \put(105,54){\circle*{1.5}}
                \put(105,54){\circle{4}}
                \put(-45,54){\circle*{1.5}}
                \put(-45,54){\circle{4}}
                \put(51,26){$v_1$}
                \put(32.5,-1){$v_2$}
                \put(4,26){$v_3$}
                \put(28,48){$v_4$}
                \put(-47,48){$w_1$}
                \put(104,48){$w_2$}
            \end{picture}
            \caption{The all-realizable $(1,2)$-edge-restricting planar graph $G_{14}$. The restricting vertices are encircled.}
            \label{figure:G_14}
        \end{figure}

        \begin{lemma}\label{lemma:G_14_a}
            We have $a(G_{14})=7$.
        \end{lemma}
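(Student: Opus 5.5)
The plan is to prove the lower bound $a(G_{14}) \geq 7$ with an explicit forest, and the upper bound by contradiction using Lemma~\ref{lemma:G_12_a} and Lemma~\ref{lemma:G_12_induced_forests_6}. From Figure~\ref{figure:G_14}, the new vertex $w_1$ is adjacent to $v_2$, $v_3$, $v_4$, $w_2$, and $w_2$ is adjacent to $v_1$, $v_2$, $v_4$, $w_1$.

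For the lower bound, I would start from the $6$-vertex induced forest $v_2$, $u_3$, $u_2$, $u_5$, $u_6$, $v_4$ of $G_{12}$ given in the proof of Lemma~\ref{lemma:G_12_a}. In $G_{12}$ its components are $\{v_2,u_3,u_2\}$ and $\{u_5,u_6,v_4\}$: $v_2u_3$ and $u_3u_2$ are edges, $u_5u_6$ and $u_6v_4$ are edges, and there are no edges between the two parts. Now add $w_1$. Its only neighbours in this set are $v_2$ and $v_4$, which lie in different components, so $w_1$ merely joins the two trees into one. The result is an induced forest on $7$ vertices, which gives $a(G_{14}) \geq 7$.

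For the upper bound, suppose $F$ is an induced forest in $G_{14}$ on at least $8$ vertices. By Lemma~\ref{lemma:G_12_a}, $F$ has at most $6$ vertices in the internal $G_{12}$. Hence $F$ contains both $w_1$ and $w_2$, and exactly $6$ vertices of $G_{12}$. The restriction $F'$ of $F$ to $G_{12}$ is then an induced forest on $6$ vertices, so Lemma~\ref{lemma:G_12_induced_forests_6} applies to it, and I would treat its three alternatives in turn.
\begin{enumerate}
    \item If $F'$ contains $v_2$, then $w_1w_2v_2$ is a triangle in $F$.
    \item If $F'$ contains $v_4$, then $w_1w_2v_4$ is a triangle in $F$.
    \item Otherwise $F'$ contains $v_1$ and $v_3$ joined by a path $p$ inside $F'$. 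Then $v_1 p v_3 w_1 w_2$ is a cycle in $F$, using the edges $v_3w_1$, $w_1w_2$ and $w_2v_1$.
\end{enumerate}
Each case is a contradiction, so $a(G_{14}) \leq 7$.

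There is no real obstacle, since the case analysis was already done in the earlier lemmas. The only point needing care is reading the adjacencies of $w_1$ and $w_2$ correctly from the figure, and checking that the components of the explicit $6$-vertex forest separate $v_2$ from $v_4$.
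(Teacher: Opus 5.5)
Your proposal is correct and matches the paper's proof essentially step for step. For the lower bound you use the same $7$-vertex forest $w_1,v_2,u_3,u_2,u_5,u_6,v_4$, and you also spell out why adding $w_1$ creates no cycle, which the paper leaves as a direct check. For the upper bound you force $w_1,w_2\in F$ via Lemma~\ref{lemma:G_12_a} and then rule out the three alternatives of Lemma~\ref{lemma:G_12_induced_forests_6} by the triangles $w_1w_2v_2$, $w_1w_2v_4$ and the cycle through $v_3w_1w_2v_1$, exactly as the paper does.
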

        \begin{proof}
            We directly verify that the $7$ vertices $w_1$, $v_2$, $u_3$, $u_2$, $u_5$, $u_6$, $v_4$ induce a forest in $G_{14}$.

            It remains to prove that there are no induced forests in $G_{14}$ on at least $8$ vertices. Suppose, to the contrary, that there exists an induced forest $F$ on at least $8$ vertices. By Lemma~\ref{lemma:G_12_a}, the internal $G_{12}$ contains at most $6$ vertices of $F$. Therefore, $F$ must contain both $w_1$ and $w_2$ and the internal $G_{12}$ must contain exactly $6$ vertices of $F$. By Lemma~\ref{lemma:G_12_induced_forests_6}, $F$ contains either $v_2$ or $v_4$ or both $v_1$ and $v_3$ such that they belong to the same connected component of $F$. If $F$ contains $v_2$, then $w_1w_2v_2$ is a triangle in $F$. If $F$ contains $v_4$, then $w_1w_2v_4$ is a triangle in $F$. If $F$ contains both $v_1$ and $v_3$ connected by a path $p$ in $F$ inside the internal $G_{12}$, then $w_1v_3pv_1w_2$ is a cycle in $F$. So, in all cases, we found a cycle in $F$, which is a contradiction. This concludes the proof of the lemma.
        \end{proof}

        \begin{lemma}\label{lemma:G_14_induced_forests_7}
            The graph $G_{14}$ is all-realizable $(1,2)$-edge-restricting with the restricting vertices $w_1$ and $w_2$.
        \end{lemma}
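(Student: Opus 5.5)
We need to show two things about $G_{14}$ with restricting edge $w_1w_2$. First, every largest induced forest (of size $7$, by Lemma~\ref{lemma:G_14_a}) contains at least one of $w_1$, $w_2$; containing both is permitted, so the $(1,2)$-property reduces to excluding zero. Second, each of the subsets $\{w_1\}$, $\{w_2\}$ and $\{w_1,w_2\}$ is realized by some induced forest on $7$ vertices.

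For the restricting property, suppose an induced forest $F$ on $7$ vertices contains neither $w_1$ nor $w_2$. Then all $7$ of its vertices lie in the internal $G_{12}$. This contradicts Lemma~\ref{lemma:G_12_a}, which gives $a(G_{12})=6$. Hence every largest induced forest contains $1$ or $2$ restricting vertices.

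For all-realizability we exhibit explicit forests.
\begin{itemize}
\item The set $\{w_1\}$ is realized by the forest $w_1, v_2, u_3, u_2, u_5, u_6, v_4$ from the proof of Lemma~\ref{lemma:G_14_a}, which omits $w_2$.
\item For $\{w_2\}$ the natural choice is the same six $G_{12}$-vertices with $w_2$ in place of $w_1$. We must check acyclicity. The neighbours of $w_2$ in the figure are $v_1$, $v_4$, $v_2$ and $w_1$, while $w_1$ is adjacent to $v_3$, $v_4$, $v_2$ and $w_2$. So $w_2$ meets the set $\{v_2,u_3,u_2,u_5,u_6,v_4\}$ exactly in $v_2$ and $v_4$, just as $w_1$ does. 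Since $w_1$ and $w_2$ have the same neighbourhood inside this set, the swap preserves acyclicity.
\item For $\{w_1,w_2\}$ we need $5$ vertices of $G_{12}$ avoiding $v_2$ and $v_4$ (otherwise $w_1w_2v_2$ or $w_1w_2v_4$ is a triangle) such that no path through $G_{12}$ joins $v_1$ to $v_3$. The simplest choice avoids $v_1,\dots,v_4$ entirely and takes a $5$-vertex induced forest of $G_8$, for instance $u_1,u_2,u_3,u_4,u_5$ from Lemma~\ref{lemma:G_8_a}. Adding the edge $w_1w_2$ as a separate component, which has no neighbours in $G_8$, gives an induced forest on $7$ vertices.
\end{itemize}

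The main obstacle is only bookkeeping: confirming each listed set induces no cycle. The adjacencies must be read carefully off Figures~\ref{figure:G_12} and~\ref{figure:G_14}. In particular, in the $\{w_1\}$ forest, $v_2$ is adjacent to $u_3$ but not to $u_2$, $u_5$ or $u_6$. Also $v_4$ is adjacent to $u_6$ but not to $u_5$, since $v_4$ is adjacent to $u_1$ and $u_6$. The resulting path $w_1$–$v_2$–$u_3$–$u_2$ and the component through $w_1$–$v_4$–$u_6$–$u_5$ must not close a cycle. This is already guaranteed by Lemma~\ref{lemma:G_14_a}, which verified that set is a forest.
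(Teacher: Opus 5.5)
Your proof is correct and follows the paper's argument: the $(1,2)$-property comes from $a(G_{12})=6$, and all-realizability comes from explicit $7$-vertex forests, with the same witnesses for $\{w_1\}$ and $\{w_2\}$. The only difference is your witness for $\{w_1,w_2\}$: the paper uses $w_1,w_2,v_1,u_3,u_4,u_5,u_6$ (the induced path $w_1w_2v_1u_3u_4u_5u_6$), while your choice $w_1,w_2,u_1,\dots,u_5$ also works, because $w_1$ and $w_2$ have no neighbours in $G_8$ and $u_1u_2u_3u_4u_5$ is an induced path.
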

        \begin{proof}
            By Lemma~\ref{lemma:G_14_a}, we have $a(G_{14})=7$. Therefore, to prove that $G_{14}$ is $(1,2)$-edge-restricting with the restricting vertices $w_1$ and $w_2$, we need to prove that every induced forest in $G_{14}$ on $7$ vertices contains either $w_1$ or $w_2$. Suppose, to the contrary, that there exists an induced forest $F$ in $G_{14}$ on $7$ vertices that omits both $w_1$ and $w_2$. Then $F$ is entirely contained inside the internal $G_{12}$, which contradicts the fact that $a(G_{12})=6$ established in Lemma~\ref{lemma:G_12_a}. Therefore, $G_{14}$ is $(1,2)$-edge-restricting with the restricting vertices $w_1$ and $w_2$.
            
            To prove that $G_{14}$ is all-realizable, we directly verify that the vertices $w_1$, $v_2$, $u_3$, $u_2$, $u_5$, $u_6$, $v_4$ induce a forest in $G_{14}$ on $7$ vertices and contain only $w_1$ among $w_1$ and $w_2$, the vertices $w_2$, $v_2$, $u_3$, $u_2$, $u_5$, $u_6$, $v_4$ induce a forest in $G_{14}$ on $7$ vertices and contain only $w_2$ among $w_1$ and $w_2$, and the vertices $w_1$, $w_2$, $v_1$, $u_3$, $u_4$, $u_5$, $u_6$ induce a forest in $G_{14}$ on $7$ vertices and contain both $w_1$ and $w_2$. This concludes the proof of the fact that $G_{14}$ is all-realizable and of the entire lemma.
        \end{proof}

    \section{The counterexample}
        Now, we augment the graph $G_{14}$ as described in Lemma~\ref{lemma:augmenting_1-2-edge-restricting_to_counterexample}. Specifically, we take a triangle and attach to each of its three edges a copy of $G_{14}$ by the restricting edge (identifying the restricting edge in each copy with an edge of the triangle), and denote the resulting planar graph on $39$ vertices by $G_{39}$. It is shown in Figure~\ref{figure:G_39}.

        \begin{figure}[htb]
            \centering
            \setlength{\unitlength}{0.64mm}
            \begin{picture}(170.0308,185.4038)(-10.0154,-54.75)

                \Line(98,-31)(75,-54)
                \Line(98,-31)(81,-41)
                \Line(98,-31)(85,-31)
                \Line(98,-31)(81,-21)
                \Line(98,-31)(75,-8)
                \Line(98,-31)(150,0)
                \Line(69,-21)(52,-31)
                \Line(69,-21)(75,-8)
                \Line(69,-21)(81,-21)
                \Line(69,-21)(75,-27)
                \Line(69,-21)(65,-31)
                \Line(75,-27)(65,-31)
                \Line(75,-27)(81,-21)
                \Line(75,-27)(85,-31)
                \Line(75,-27)(75,-35)
                \Line(65,-31)(75,-35)
                \Line(65,-31)(69,-41)
                \Line(65,-31)(52,-31)
                \Line(52,-31)(69,-41)
                \Line(52,-31)(75,-54)
                \Line(52,-31)(0,0)
                \Line(52,-31)(75,-8)
                \Line(69,-41)(75,-35)
                \Line(69,-41)(81,-41)
                \Line(69,-41)(75,-54)
                \Line(75,-35)(85,-31)
                \Line(75,-35)(81,-41)
                \Line(81,-41)(75,-54)
                \Line(81,-41)(85,-31)
                \Line(85,-31)(81,-21)
                \Line(81,-21)(75,-8)
                \Line(75,-8)(0,0)
                \Line(75,-8)(150,0)
                \Line(75,-54)(150,0)
                \Line(75,-54)(0,0)
                \Line(150,0)(0,0)
                \put(0,0){\circle*{1.5}}
                \put(150,0){\circle*{1.5}}
                \put(98,-31){\circle*{1.5}}
                \put(69,-21){\circle*{1.5}}
                \put(75,-27){\circle*{1.5}}
                \put(65,-31){\circle*{1.5}}
                \put(52,-31){\circle*{1.5}}
                \put(69,-41){\circle*{1.5}}
                \put(75,-35){\circle*{1.5}}
                \put(81,-41){\circle*{1.5}}
                \put(85,-31){\circle*{1.5}}
                \put(81,-21){\circle*{1.5}}
                \put(75,-8){\circle*{1.5}}
                \put(75,-54){\circle*{1.5}}

                \Line(127.8468,100.3704)(159.2654,91.9519)
                \Line(127.8468,100.3704)(145.0071,90.6480)
                \Line(127.8468,100.3704)(134.3468,89.1121)
                \Line(127.8468,100.3704)(127.6866,80.6480)
                \Line(127.8468,100.3704)(119.4282,68.9519)
                \Line(127.8468,100.3704)(75,129.9038)
                \Line(133.6866,70.2557)(150.8468,60.5333)
                \Line(133.6866,70.2557)(119.4282,68.9519)
                \Line(133.6866,70.2557)(127.6866,80.6480)
                \Line(133.6866,70.2557)(135.8827,78.4519)
                \Line(133.6866,70.2557)(144.3468,71.7916)
                \Line(135.8827,78.4519)(144.3468,71.7916)
                \Line(135.8827,78.4519)(127.6866,80.6480)
                \Line(135.8827,78.4519)(134.3468,89.1121)
                \Line(135.8827,78.4519)(142.8109,82.4519)
                \Line(144.3468,71.7916)(142.8109,82.4519)
                \Line(144.3468,71.7916)(151.0071,80.2557)
                \Line(144.3468,71.7916)(150.8468,60.5333)
                \Line(150.8468,60.5333)(151.0071,80.2557)
                \Line(150.8468,60.5333)(159.2654,91.9519)
                \Line(150.8468,60.5333)(150,0)
                \Line(150.8468,60.5333)(119.4282,68.9519)
                \Line(151.0071,80.2557)(142.8109,82.4519)
                \Line(151.0071,80.2557)(145.0071,90.6480)
                \Line(151.0071,80.2557)(159.2654,91.9519)
                \Line(142.8109,82.4519)(134.3468,89.1121)
                \Line(142.8109,82.4519)(145.0071,90.6480)
                \Line(145.0071,90.6480)(159.2654,91.9519)
                \Line(145.0071,90.6480)(134.3468,89.1121)
                \Line(134.3468,89.1121)(127.6866,80.6480)
                \Line(127.6866,80.6480)(119.4282,68.9519)
                \Line(119.4282,68.9519)(150,0)
                \Line(119.4282,68.9519)(75,129.9038)
                \Line(159.2654,91.9519)(75,129.9038)
                \Line(159.2654,91.9519)(150,0)
                \Line(75,129.9038)(150,0)
                \put(150,0){\circle*{1.5}}
                \put(75,129.9038){\circle*{1.5}}
                \put(127.8468,100.3704){\circle*{1.5}}
                \put(133.6866,70.2557){\circle*{1.5}}
                \put(135.8827,78.4519){\circle*{1.5}}
                \put(144.3468,71.7916){\circle*{1.5}}
                \put(150.8468,60.5333){\circle*{1.5}}
                \put(151.0071,80.2557){\circle*{1.5}}
                \put(142.8109,82.4519){\circle*{1.5}}
                \put(145.0071,90.6480){\circle*{1.5}}
                \put(134.3468,89.1121){\circle*{1.5}}
                \put(127.6866,80.6480){\circle*{1.5}}
                \put(119.4282,68.9519){\circle*{1.5}}
                \put(159.2654,91.9519){\circle*{1.5}}

                \Line(-0.8468,60.5334)(-9.2654,91.9519)
                \Line(-0.8468,60.5334)(-1.0071,80.2558)
                \Line(-0.8468,60.5334)(5.6532,71.7917)
                \Line(-0.8468,60.5334)(16.3134,70.2558)
                \Line(-0.8468,60.5334)(30.5718,68.9519)
                \Line(-0.8468,60.5334)(0,0)
                \Line(22.3134,80.6481)(22.1532,100.3705)
                \Line(22.3134,80.6481)(30.5718,68.9519)
                \Line(22.3134,80.6481)(16.3134,70.2558)
                \Line(22.3134,80.6481)(14.1173,78.4519)
                \Line(22.3134,80.6481)(15.6532,89.1122)
                \Line(14.1173,78.4519)(15.6532,89.1122)
                \Line(14.1173,78.4519)(16.3134,70.2558)
                \Line(14.1173,78.4519)(5.6532,71.7917)
                \Line(14.1173,78.4519)(7.1891,82.4519)
                \Line(15.6532,89.1122)(7.1891,82.4519)
                \Line(15.6532,89.1122)(4.9929,90.6481)
                \Line(15.6532,89.1122)(22.1532,100.3705)
                \Line(22.1532,100.3705)(4.9929,90.6481)
                \Line(22.1532,100.3705)(-9.2654,91.9519)
                \Line(22.1532,100.3705)(75,129.9038)
                \Line(22.1532,100.3705)(30.5718,68.9519)
                \Line(4.9929,90.6481)(7.1891,82.4519)
                \Line(4.9929,90.6481)(-1.0071,80.2558)
                \Line(4.9929,90.6481)(-9.2654,91.9519)
                \Line(7.1891,82.4519)(5.6532,71.7917)
                \Line(7.1891,82.4519)(-1.0071,80.2558)
                \Line(-1.0071,80.2558)(-9.2654,91.9519)
                \Line(-1.0071,80.2558)(5.6532,71.7917)
                \Line(5.6532,71.7917)(16.3134,70.2558)
                \Line(16.3134,70.2558)(30.5718,68.9519)
                \Line(30.5718,68.9519)(75,129.9038)
                \Line(30.5718,68.9519)(0,0)
                \Line(-9.2654,91.9519)(0,0)
                \Line(-9.2654,91.9519)(75,129.9038)
                \Line(0,0)(75,129.9038)
                \put(75,129.9038){\circle*{1.5}}
                \put(0,0){\circle*{1.5}}
                \put(-0.8468,60.5334){\circle*{1.5}}
                \put(22.3134,80.6481){\circle*{1.5}}
                \put(14.1173,78.4519){\circle*{1.5}}
                \put(15.6532,89.1122){\circle*{1.5}}
                \put(22.1532,100.3705){\circle*{1.5}}
                \put(4.9929,90.6481){\circle*{1.5}}
                \put(7.1891,82.4519){\circle*{1.5}}
                \put(-1.0071,80.2558){\circle*{1.5}}
                \put(5.6532,71.7917){\circle*{1.5}}
                \put(16.3134,70.2558){\circle*{1.5}}
                \put(30.5718,68.9519){\circle*{1.5}}
                \put(-9.2654,91.9519){\circle*{1.5}}
            \end{picture}
            \caption{The graph $G_{39}$ that is a counterexample to the AB conjecture.}
            \label{figure:G_39}
        \end{figure}

        \begin{theorem}\label{theorem:G_39_a_counterexample}
            We have $a(G_{39})=19$ and $r_a(G_{39})=\frac{19}{39}<\frac{1}{2}$. Thus, $G_{39}$ is a counterexample to the AB conjecture, and the AB conjecture does not hold.
        \end{theorem}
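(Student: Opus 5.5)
The theorem follows by applying Lemma~\ref{lemma:augmenting_1-2-edge-restricting_to_counterexample} with $H=G_{14}$, so the plan is to check its hypotheses and then compute.

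First, $G_{14}$ is planar. It is drawn without crossings in Figure~\ref{figure:G_14}, and the restricting edge $w_1w_2$ lies on the outer face, which the planarity argument in the augmentation lemma uses. By Lemma~\ref{lemma:G_14_induced_forests_7}, $G_{14}$ is an all-realizable $(1,2)$-edge-restricting graph with restricting vertices $w_1$ and $w_2$, which are joined by an edge. It has $n=14\ge 2$ vertices, and by Lemma~\ref{lemma:G_14_a}, $a(G_{14})=7$.

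Since $G_{39}$ is exactly the augmentation from Lemma~\ref{lemma:augmenting_1-2-edge-restricting_to_counterexample}, that lemma gives a planar graph on $3\cdot 14-3=39$ vertices. Because $G_{14}$ is all-realizable, the lemma gives the exact value
\[
a(G_{39})=3a(G_{14})-2=3\cdot 7-2=19.
\]
Hence $r_a(G_{39})=\frac{19}{39}$. Since $2\cdot 19=38<39$, we have $\frac{19}{39}<\frac12$.

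As a consistency check, the lemma's ratio formula gives
\[
r_a(G_{39})=\tfrac12-\tfrac1{13}\left(\tfrac23-\tfrac12\right)=\tfrac12-\tfrac1{78}=\tfrac{38}{78}=\tfrac{19}{39},
\]
which matches.

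Therefore $G_{39}$ is a simple planar graph with $a(G_{39})<\frac{n}{2}$, contradicting Conjecture~\ref{conjecture:ab}. All the real difficulty sits in the earlier lemmas, especially Lemma~\ref{lemma:G_12_a}, so this step involves no serious obstacle. The only points that need care are that the construction of $G_{39}$ is literally the one in the augmentation lemma and that $G_{39}$ is simple, that is, the three copies share no edges other than the triangle edges. For only the counterexample claim, the upper bound $a(G_{39})\le 19$ alone would suffice.
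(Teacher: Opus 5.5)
Your proposal is correct and matches the paper's proof: you apply Lemma~\ref{lemma:augmenting_1-2-edge-restricting_to_counterexample} with $H=G_{14}$, using Lemmas~\ref{lemma:G_14_a} and~\ref{lemma:G_14_induced_forests_7}, to get $a(G_{39})=3\cdot 7-2=19$. Your extra remarks on planarity, simplicity, and the ratio-formula check ($\tfrac12-\tfrac1{78}=\tfrac{19}{39}$) are accurate but go beyond what the paper's short proof needs.
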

        \begin{proof}
            By Lemma~\ref{lemma:G_14_a}, we have $a(G_{14})=7$. By Lemma~\ref{lemma:G_14_induced_forests_7}, $G_{14}$ is all-realizable $(1,2)$-edge-restricting with the restricting vertices $w_1$ and $w_2$. Now, by Lemma~\ref{lemma:augmenting_1-2-edge-restricting_to_counterexample}, we have $a(G_{39})=3a(G_{14})-2=3 \times 7-2=19$ and hence $r_a(G_{39})=\frac{19}{39}$.
        \end{proof}

        \begin{corollary}\label{corollary:counterexamples_19/39}
            There exists an infinite sequence of planar graphs with $r_a(G)=\frac{19}{39}$.
        \end{corollary}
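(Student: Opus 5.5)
The simplest approach is to take disjoint unions of copies of $G_{39}$. For each integer $k \geq 1$, let $G^{(k)}$ be the disjoint union of $k$ copies of $G_{39}$. This is a planar graph on $39k$ vertices: embed the copies in pairwise disjoint regions of the plane. It remains to show that $a(G^{(k)}) = 19k$, which gives $r_a(G^{(k)}) = \frac{19k}{39k} = \frac{19}{39}$ for every $k$, and hence an infinite sequence of pairwise non-isomorphic planar graphs with the required ratio.

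The key step is that $a$ is additive over disjoint unions. Let $F$ be an induced forest in $G^{(k)}$. Since there are no edges between different copies, the restriction of $F$ to each copy is an induced subgraph of $F$. An induced subgraph of a forest is a forest, so each restriction is an induced forest in that copy of $G_{39}$. By Theorem~\ref{theorem:G_39_a_counterexample}, each restriction therefore has at most $a(G_{39}) = 19$ vertices, and so $|V(F)| \leq 19k$.

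For the matching lower bound, take a largest induced forest of size $19$ in each copy. Their union induces a forest in $G^{(k)}$: again there are no edges between copies, so the union is a disjoint union of forests. Hence $a(G^{(k)}) \geq 19k$, and combined with the upper bound, $a(G^{(k)}) = 19k$.

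There is no real obstacle here; the only point needing care is to note explicitly that components in different copies cannot form a cycle together, because the copies are not joined by any edges. If one wanted connected examples instead, one could also glue copies of $G_{39}$ at a single vertex, but then the ratio would shift slightly. So I would present the disjoint-union version, which gives exactly $\frac{19}{39}$.
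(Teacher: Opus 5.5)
Your proposal is correct and matches the paper's proof: both take disjoint unions of copies of $G_{39}$. The only difference is that the paper cites the additivity of $a$ over disjoint unions from the literature, whereas you prove it directly, restricting an induced forest to each copy for the upper bound and taking the union of largest forests for the lower bound.
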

        \begin{proof}
            The statement follows by taking a disjoint union of an arbitrary number of copies of $G_{39}$ and using the known fact that $a(G)$ is additive for a disjoint union of graphs~\cite[Lemma~5.23, p.~108]{makarov_ab_conjecture_for_multigraphs}.
        \end{proof}

    \section{Optimization of the ratio}
        We omit the calculation and the proof here, but using a $K_4$ instead of a triangle and attaching a copy of $G_{14}$ by the restricting edge to each edge of the $K_{4}$ gives a planar graph $G_{76}$ on $76$ vertices with $a(G_{76})=37$ and $r_a(G_{76})=\frac{37}{76}$. So, $G_{76}$ has a larger number of vertices than $G_{39}$, but a slightly smaller ratio $r_a(G_{76})$.

        Another potential way of constructing planar graphs with smaller ratio $r_a(G)$ is to try to use Lemma~\ref{lemma:augmenting_1-2-edge-restricting_to_counterexample} iteratively to construct a recursive sequence of graphs with decreasing ratio $r_a(G)$ on each step. For that to work, we need to somehow make the augmented graph to also be $(1,2)$-edge-restricting. However, we were not able to obtain such a recursive construction with the ratio $r_a(G)$, even in the limit, smaller than for the non-recursive construction of $G_{76}$.

    \section{Remaining open problems}
        Even though the AB conjecture is resolved, there is still a gap between the best known lower bound $a(G) \geq \frac{2}{5}n$ and the values of $a(G)$ from the constructions.
        
        \begin{openproblem}
            Find the best possible lower bound on $a(G)$ in terms of $n$ for every planar graph on $n$ vertices.
        \end{openproblem}

        \begin{openproblem}
            Find a proof of the lower bound $\alpha(G) \geq \frac{n}{4}$ for every planar graph on $n$ vertices that does not use the Four Color Theorem.
        \end{openproblem}

    \printbibliography
\end{document}